\documentclass[a4paper,12pt]{amsart}
\usepackage{amsmath,amssymb,amsfonts,amsthm}
\usepackage{latexsym,mathrsfs}
\usepackage{graphicx}
\usepackage[all]{xy}
\input xypic
\usepackage{color}
\usepackage[pagebackref]{hyperref}
\hypersetup{colorlinks=true,linkcolor=red,citecolor=blue}
\usepackage[OT2,T1]{fontenc}
\usepackage[utf8]{inputenc}
\usepackage{lmodern}
\usepackage{microtype}
\usepackage{enumerate}
\usepackage{tikz-cd}
\usepackage{bm}
\usepackage{ulem}
\usepackage{mathtools}
\usepackage{tensor}
\theoremstyle{plain}

\newtheorem{corollary}[subsection]{{\bf Corollary}}
\newtheorem{proposition}[subsection]{{\bf Proposition}}

\theoremstyle{definition}

\theoremstyle{remark}

\numberwithin{equation}{subsection}

\DeclareMathOperator{\Sp}{Sp}

\DeclareBoldMathCommand{\bbot}{\bot}

\DeclareSymbolFont{cyrletters}{OT2}{wncyr}{m}{n}
\DeclareMathSymbol{\Sha}{\mathalpha}{cyrletters}{"58}

\newcommand{\Z}{\mathbb{Z}}

\begin{document}
\title[Tensor square of linear groups]{A note on the non-abelian tensor squares of linear groups}
\author{Primo\v z Moravec}
\address{{
Faculty of  Mathematics and Physics, University of Ljubljana,
and Institute of Mathematics, Physics and Mechanics,
Slovenia}}
\email{primoz.moravec@fmf.uni-lj.si}
\subjclass[2020]{20J05}
\keywords{Non-abelian tensor square, linear groups}
\thanks{ORCID: \url{https://orcid.org/0000-0001-8594-0699}.}
\thanks{The author acknowledges the financial support from the Slovenian Research and Innovation Agency (ARIS), research core funding No. P1-0222, and project No. J1-50001.}
\date{\today}
\begin{abstract}
    We show that there exists a finitely generated linear group whose non-abelian tensor square is not linear. On the other hand, the non-abelian tensor square of the braid group $B_n$, $n>3$, is linear.
\end{abstract}
\maketitle


\section{Introduction}
\label{ss:intro}
\noindent
In this short note we observe that the non-abelian tensor square of a finitely generated linear group need not be a linear group. Then we show that the tensor square of the braid group $B_n$, $n>3$, is a linear group. These two results answer the Kourovka Notebook problem 19.9.

An example exhibiting the first statement, and some of the arguments leading to the second statement, were intially AI-generated, and then substantially simplified by the author in order to avoid involved homological arguments. As the reader will observe, both of the above results follow from standard facts. The note was written by the author, who takes the responsiblity for correctness.

Let $G$ be a group, acting on itself by conjugation from the left.  Its non-abelian tensor
square $G\otimes G$ is a group generated by symbols $g\otimes h$, $g,h\in G$, subject
to
\begin{align*}
 gg'\otimes h&=({}^{g}g'\otimes{}^{g}h)(g\otimes h),\\
 g\otimes hh'&=(g\otimes h)({}^{h}g\otimes{}^{h}h').
\end{align*}
This definition was introduced by Brown and Loday \cite{BrownLoday1987}.

The commutator map is
 $\kappa_G:G\otimes G\longrightarrow G'$ given by 
 $g\otimes h\longmapsto[g,h]$ is a homomorphism.
Let $\Delta(G)$ be the subgroup generated by all $g\otimes g$.  The
non-abelian exterior square of $G$ is
$G\wedge G=(G\otimes G)/\Delta(G)$. The commutator map $G\wedge G\to G'$ will be denoted by $\bar{\kappa}_G$.

 We will repeatedly use the following standard
structural facts:

\begin{proposition}[\cite{BrownLoday1987}]
    \label{prop:basictensor}
For every group $G$ there is a natural central extension
\begin{equation}\label{eq:exterior-extension}
 1\longrightarrow H_2(G;\Z)\longrightarrow G\wedge G
 \xrightarrow{\,\bar\kappa_G\,}G'\longrightarrow1.
\end{equation}
If $G$ is perfect, then \eqref{eq:exterior-extension} is the universal
central extension of $G$.
\end{proposition}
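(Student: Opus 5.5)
The plan is to realize $G\wedge G$ via a free presentation and compare it with Hopf's formula. Choose a free presentation $G\cong F/R$. The key intermediate claim is an isomorphism
\[
G\wedge G\cong [F,F]/[F,R],\qquad g\wedge h\longmapsto [\tilde g,\tilde h][F,R],
\]
where $\tilde g,\tilde h\in F$ are arbitrary lifts of $g,h$.

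First we check that this map is well defined. Changing a lift by an element of $R$ changes $[\tilde g,\tilde h]$ only modulo $[F,R]$, by the commutator identities. Next we verify the defining relations. The identities $[xy,z]={}^{x}[y,z]\,[x,z]$ and $[x,yz]=[x,y]\,{}^{y}[x,z]$ hold in $F$ for left conjugation and the paper's commutator convention. These are exactly the tensor relations, and $[\tilde g,\tilde g]=1$ kills $\Delta(G)$. So we get a surjective homomorphism $\phi\colon G\wedge G\to [F,F]/[F,R]$.

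For the inverse, define $\psi\colon [F,F]/[F,R]\to G\wedge G$ on generators by $[x,y]\mapsto \bar x\wedge\bar y$. This is the main obstacle. I would handle it with the standard Brown--Loday device: the crossed-module identities show $G\wedge G$ is a crossed $F$-module via $F\to G$, with $\bar\kappa_G$ equivariant. The map $F\times F\to G\wedge G$, $(x,y)\mapsto \bar x\wedge\bar y$, is then a crossed pairing. One must show it induces a homomorphism on $[F,F]$ that vanishes on $[F,R]$. Vanishing on $[F,R]$ holds because $\bar r=1$ for $r\in R$ and $1\wedge y=1$. Well-definedness on $[F,F]$ uses the universal property of $F\wedge F$ together with the fact that $\kappa_F\colon F\wedge F\to [F,F]$ is an isomorphism for free $F$. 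The latter holds because $H_2(F)=0$, which can be proved directly for free groups by Hopf applied to the trivial presentation. Since $\psi\phi$ and $\phi\psi$ are the identity on generators, $\phi$ is an isomorphism.

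With the isomorphism in hand, the proposition follows directly. Under $\phi$, the map $\bar\kappa_G$ becomes the natural map $[F,F]/[F,R]\to [F,F]R/R=G'$. Its kernel is $([F,F]\cap R)/[F,R]$, which is $H_2(G;\Z)$ by Hopf's formula. This kernel is central because $[F,R]$ has been factored out. Naturality holds because a homomorphism $G\to K$ lifts to the free presentations, and the isomorphism is independent of the choices. If $G$ is perfect, then $G'=G$, and $[F,F]/[F,R]$ is perfect because $F=[F,F]R$. It is also a central extension of $G$ with kernel $H_2(G)$. By the classical characterization (Schur, Milnor), $[F,F]/[F,R]$ is the universal central extension of a perfect group. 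Alternatively, any central extension $1\to A\to E\to G\to 1$ receives the map $x\wedge y\mapsto [\tilde x,\tilde y]$, which is well defined by centrality, and this map is unique because $G\wedge G$ is perfect.
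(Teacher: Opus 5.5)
Your overall strategy is the standard one, and it matches what the paper relies on. The paper gives no proof of its own: it cites Brown--Loday and separately records the identification $G\wedge G\cong F'/[F,R]$. Your construction of $\phi$, the vanishing of $\psi$ on $[F,R]$, and the deduction of both assertions from the isomorphism are all fine.

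The gap is the step you flag as the main obstacle. To define $\psi$ you need $\kappa_F\colon F\wedge F\to[F,F]$ to be injective for free $F$, and you justify this by $H_2(F)=0$. That implication only holds if you already know $\ker\kappa_F\cong H_2(F)$, which is the first assertion of the proposition for $G=F$. Your own argument cannot supply it:
\begin{enumerate}
\item[(i)] For the trivial presentation $F=F/1$, your map $\phi$ \emph{is} $\kappa_F$, so building its inverse requires exactly the injectivity in question.
\item[(ii)] Using any other free presentation $F_1\to F$ just moves the problem to $\kappa_{F_1}$.
\end{enumerate}
So the reasoning is circular precisely where the content lies. Injectivity of $F\wedge F\to F$ for free $F$ is the substance of Miller's theorem: every relation among commutators in a free group is a consequence of the defining identities of the exterior square. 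It needs an independent proof (Miller's analysis of relations among commutators, or Brown--Loday's argument), or at least an explicit citation.

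The second assertion can be obtained without the free case, essentially by your closing argument. You must, however, derive its ingredients directly rather than from the unproven isomorphism. The crossed-module identities $m\,m'\,m^{-1}={}^{\bar\kappa_G(m)}m'$ and ${}^{g}m\,m^{-1}=g\wedge\bar\kappa_G(m)$ in $G\wedge G$ show two things: $\ker\bar\kappa_G$ is central, and $[m,m']=\bar\kappa_G(m)\wedge\bar\kappa_G(m')$, so $G\wedge G$ is perfect when $G$ is. Your argument then shows that $G\wedge G\to G$ is a universal central extension. By uniqueness of universal central extensions it is isomorphic over $G$ to Milnor's model $F'/[F,R]$, and Hopf's formula identifies the kernel with $H_2(G)$. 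For arbitrary $G$ there is no such shortcut, and the free case must be proved or cited.
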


If a group $G$ is given by a free presentation $G=F/R$, then $G\wedge G$ can be naturally identified with $F'/[F,R]$, see, for example, \cite{BlythFumagalliMorigi2010}. Explicitly, an element $xR\wedge yR$ of $G\wedge G$ corresponds to the commutator $[x,y][F,R]$ in $F/[F,R]$.

\begin{proposition}[\cite{BlythFumagalliMorigi2010}]
    \label{prop:splitting}
Suppose that $G^{\rm ab}=G/G'$ is finitely generated and has no element of
order $2$.  Then
\[
 G\otimes G\cong \Delta(G)\times(G\wedge G),
 \qquad
 \Delta(G)\cong\Gamma(G^{\rm ab}),
\]
where $\Gamma$ is Whitehead's quadratic functor \cite{BrownLoday1987}.
\end{proposition}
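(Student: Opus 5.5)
We use the exact sequence
\[
 H_3(G;\Z)\longrightarrow \Gamma(G^{\rm ab})\xrightarrow{\ \psi\ } G\otimes G\longrightarrow G\wedge G\longrightarrow 1,
\]
where $\psi(\gamma(gG'))=g\otimes g$; its image is $\Delta(G)$, and it is natural in $G$ and compatible with the free-presentation description $G\wedge G\cong F'/[F,R]$. The plan is to produce a retraction of $G\otimes G$ onto $\Delta(G)$ whose kernel is a normal complement mapping isomorphically onto $G\wedge G$, and to show at the same time that $\psi$ is injective.

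First, $\Delta(G)$ is central in $G\otimes G$ and abelian: the defining relations give ${}^{x}(g\otimes g)=g\otimes g$ modulo the standard identity $(g\otimes g)(h\otimes h')(g\otimes g)^{-1}={}^{[g,g]}(h\otimes h')=h\otimes h'$. Next, use the abelianization map $\pi\colon G\to A:=G^{\rm ab}$ and the induced homomorphism $G\otimes G\to A\otimes A$, where the latter is the ordinary tensor product since $A$ acts trivially on itself. Because $A$ is finitely generated with no $2$-torsion, $A\cong \Z^r\times T$ with $|T|$ odd. Then the symmetrization map
\[
 s\colon \Gamma(A)\to A\otimes A,\qquad \gamma(a)\mapsto a\otimes a,
\]
is injective. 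To see this, reduce via $\Gamma(B\oplus C)=\Gamma(B)\oplus\Gamma(C)\oplus (B\otimes C)$ to the cyclic cases. We have $\Gamma(\Z)=\Z\to\Z$, $\gamma(1)\mapsto 1$, which is injective; and $\Gamma(\Z/n)=\Z/n$ for $n$ odd (versus $\Z/2n$ for $n$ even), with $\gamma(1)\mapsto 1\otimes1$ a generator of $\Z/n$, hence also injective. Moreover the image of $s$ is a direct summand of the subgroup of symmetric tensors, and the composite $\Gamma(A)\to A\otimes A\to \Gamma(A)$ with the map $a\otimes b\mapsto \gamma(a+b)-\gamma(a)-\gamma(b)$ equals multiplication by $2$. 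Since $2$ is invertible on the torsion part but not on $\Z^r$, this must be handled by choosing a basis.

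Instead, define the retraction directly. Fix a decomposition $A=\bigoplus_i \langle a_i\rangle$ and define an explicit homomorphism $\rho\colon A\otimes A\to\Gamma(A)$ by
\[
 a_i\otimes a_j\mapsto
 \begin{cases}
 \gamma(a_i), & i=j,\\
 \gamma(a_i+a_j)-\gamma(a_i)-\gamma(a_j), & i<j,\\
 0, & i>j.
 \end{cases}
\]
Checking compatibility with the orders of the $a_i$ uses oddness; this is the main obstacle, namely that for $i=j$ with $a_i$ of odd order $n$, $\gamma(a_i)$ has order $n$. One then verifies $\rho\circ s=\mathrm{id}$ on generators $\gamma(\sum m_ia_i)=\sum m_i^2\gamma(a_i)+\sum_{i<j}m_im_j[a_i,a_j]$, which matches since $\rho(\sum m_im_j a_i\otimes a_j)$ keeps exactly the diagonal terms and the $i<j$ terms.

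Then $r:=\rho\circ(\pi\otimes\pi)\colon G\otimes G\to \Gamma(A)$ satisfies $r\circ\psi=\mathrm{id}$. Hence $\psi$ is injective, so $\Delta(G)\cong\Gamma(G^{\rm ab})$, and $\Delta(G)$ is a retract. Since $\Delta(G)$ is central, $G\otimes G\cong\Delta(G)\times\ker r$, and $\ker r\cong (G\otimes G)/\Delta(G)=G\wedge G$.
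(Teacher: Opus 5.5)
The paper does not prove this proposition; it quotes it from Blyth--Fumagalli--Morigi, so there is no in-paper argument to compare yours with. Your proof is correct and self-contained, given one standard Whitehead/Brown--Loday input: that $\gamma(gG')\mapsto g\otimes g$ defines a homomorphism $\psi\colon\Gamma(G^{\rm ab})\to G\otimes G$. This rests on $g\otimes g$ depending only on $gG'$, and on $(g\otimes h)(h\otimes g)$ being bilinear in the classes of $g,h$.

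The steps that matter all check out:
\begin{enumerate}
\item $(\pi\otimes\pi)\circ\psi=s$.
\item $\rho$ is well defined. Bilinearity of $[a,b]=\gamma(a+b)-\gamma(a)-\gamma(b)$ kills $\gcd(n_i,n_j)[a_i,a_j]$. For $a$ of odd order $n$, you have $n^2\gamma(a)=\gamma(na)=0$ and $2n\gamma(a)=[na,a]=0$, hence $n\gamma(a)=0$.
\item $\rho\circ s=\mathrm{id}$, by expanding $\gamma(\sum m_ia_i)$.
\end{enumerate}

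Consequently $\psi\circ r$ is an idempotent endomorphism of $G\otimes G$ whose image is the central subgroup $\Delta(G)$. This gives $\Delta(G)\cong\Gamma(G^{\rm ab})$ and the direct decomposition in one stroke. Incidentally, it also shows that $H_3(G)\to\Gamma(G^{\rm ab})$ is zero under these hypotheses.

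Each hypothesis enters exactly once. Finite generation is used for the cyclic decomposition. Absence of $2$-torsion is used only for the diagonal values $\rho(a_i\otimes a_i)=\gamma(a_i)$; this would fail for even $n$, where $\gamma(a_i)$ has order $2n$.

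Your splitting depends on the chosen decomposition and ordering, so it is not natural in $G$. That is harmless, since only an abstract isomorphism is claimed.

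The exploratory detour about symmetric tensors and multiplication by $2$ is never used and can be deleted.
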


For our purposes, it will suffice to know that $\Gamma(0)=0$ and $\Gamma(\Z)\cong\Z$.

\begin{corollary}
    \label{cor:fingen}
    If $G$ is a perfect finitely generated group, then $G\otimes G$ is finitely generated.
\end{corollary}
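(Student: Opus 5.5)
Since $G$ is perfect, $G^{\rm ab}=0$. This is finitely generated and has no element of order $2$, so Proposition~\ref{prop:splitting} applies and gives
\[
G\otimes G\cong \Delta(G)\times (G\wedge G)\cong \Gamma(0)\times (G\wedge G)\cong G\wedge G .
\]
It therefore suffices to show that $G\wedge G$ is finitely generated.

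By Proposition~\ref{prop:basictensor}, $\bar\kappa_G\colon G\wedge G\to G'=G$ is a central extension, and since $G$ is perfect it is the universal central extension. I will use the standard fact that a universal central extension $\tilde G$ of a perfect group is itself perfect. This can also be seen directly: the images of the generators $g\wedge h$ in $(G\wedge G)^{\rm ab}$ are trivial. To check this, write $g=\prod_i[a_i,b_i]$, which is possible because $G$ is perfect, and expand $g\wedge h$ using the defining relations together with the identity $[x,y]\wedge z = (x\wedge y)\,{}^{z}(x\wedge y)^{-1}$. Each term then has the form $w\cdot {}^{z}w^{-1}$, and in the exterior square the action of $z$ agrees with conjugation by any preimage of $z$. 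Hence each $g\wedge h$ is a product of commutators.

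The key step is the following. Choose a finite generating set $x_1,\dots,x_n$ of $G$ and preimages $\tilde x_i\in G\wedge G$ under $\bar\kappa_G$. Let $H=\langle \tilde x_1,\dots,\tilde x_n\rangle$. Since $\bar\kappa_G(H)=G$ and $\ker\bar\kappa_G$ is central, we have $G\wedge G = H\cdot Z$ for a central subgroup $Z$. Taking commutators gives $[G\wedge G, G\wedge G]=[HZ,HZ]=[H,H]\le H$. Because $G\wedge G$ is perfect, this yields $G\wedge G=[H,H]\le H$. Thus $G\wedge G=H$ is generated by the $n$ elements $\tilde x_i$, and $G\otimes G$ is finitely generated.

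The only point needing care is the perfectness of $G\wedge G$. I would take it from the universal property, since a universal central extension of a perfect group is perfect; citing Proposition~\ref{prop:basictensor} makes this immediate.
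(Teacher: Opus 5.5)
Your proof is correct and follows the route the paper intends: Proposition~\ref{prop:splitting} with $\Gamma(0)=0$ reduces $G\otimes G$ to $G\wedge G$, which by Proposition~\ref{prop:basictensor} is the (perfect) universal central extension of $G$. Your lifting-of-generators argument, $G\wedge G=[HZ,HZ]=[H,H]\le H$, fills in the standard step that the paper leaves implicit in calling the statement a corollary.
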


\section{Finitely generated linear groups}
\label{ss:lin}

\noindent
Let $g\ge 4$. The group $G=\Sp_{2g}(\Z)$ is a perfect finitely presented linear group. Hence, by Proposition \ref{prop:basictensor} and Proposition \ref{prop:splitting}, we have that $G\otimes G=G\wedge G=\widetilde{G}$, where $\widetilde{G}$ denotes the universal central extension of $G$. The group $\widetilde{G}$ is not residually finite. This essentially follows from a result of Deligne \cite{Deligne1978}, for a more direct statement see \cite{FunarPitsch2021}. As $G\otimes G$ is finitely generated, Mal'cev's theorem implies that it cannot be linear. QED.

\section{Braid groups}
\label{ss:braid}

\noindent
Let $n\ge 4$.
Let
\[
 B_n=\left\langle \sigma_1,\ldots,\sigma_{n-1}\ \middle|\
 \begin{array}{ll}
 \sigma_i\sigma_{i+1}\sigma_i=\sigma_{i+1}\sigma_i\sigma_{i+1},
     &1\leq i\leq n-2,\\
 \sigma_i\sigma_j=\sigma_j\sigma_i,&|i-j|\geq2
 \end{array}\right\rangle
\]
be the Artin braid group. The aim is to prove that the group $B_n\otimes B_n$ is linear.
At first note that $B_n^{\rm ab}\cong\Z$, hence Proposition \ref{prop:splitting} gives $B_n\otimes B_n\cong \Z\times (B_n\wedge B_n)$. Thus it suffices to prove that $B_n\wedge B_n$ is linear.




\begin{proposition}
    \label{prop:embed}
    The group $B_n\wedge B_n$ embeds into $B_n'\times (S_n\ \wedge S_n)$.
\end{proposition}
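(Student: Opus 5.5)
The natural map is
\[
\Phi=(\bar\kappa_{B_n},\ \pi\wedge\pi)\colon B_n\wedge B_n\longrightarrow B_n'\times(S_n\wedge S_n),
\]
where $\pi\colon B_n\to S_n$ is the canonical surjection sending $\sigma_i$ to the transposition $(i\ i{+}1)$, and $\pi\wedge\pi$ is the map induced by functoriality of the exterior square. Both components are homomorphisms, so $\Phi$ is one, and the task is to show that $\Phi$ is injective.

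Take $x\in\ker\Phi$. Since $\bar\kappa_{B_n}(x)=1$, Proposition~\ref{prop:basictensor} places $x$ in the central subgroup $H_2(B_n;\Z)$. By naturality of the extension \eqref{eq:exterior-extension}, the restriction of $\pi\wedge\pi$ to $H_2(B_n;\Z)$ is the induced map $\pi_*\colon H_2(B_n;\Z)\to H_2(S_n;\Z)$. It therefore suffices to prove that $\pi_*$ is injective (in fact an isomorphism) for $n\ge 4$.

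Both groups are known. For $n\ge4$ we have $H_2(B_n;\Z)\cong\Z/2$, a classical computation of Arnold, and $H_2(S_n;\Z)\cong\Z/2$ by Schur. The remaining point is that the generator maps to the generator, and I would prove this with Hopf's formula, i.e.\ the identification $G\wedge G\cong F'/[F,R]$.

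Let $F$ be free on $s_1,\dots,s_{n-1}$, with $B_n=F/R$ and $S_n=F/R'$, where $R'\supseteq R$ also contains the squares $s_i^2$. Set $w=[s_1,s_3]$. Its class lies in $(R\cap F')/[F,R]=H_2(B_n)$ and is the standard generator of $\Z/2$. Under $\pi_*$ it maps to the class of $w$ in $(R'\cap F')/[F,R']$. This is the element $(1\ 2)\wedge(3\ 4)$ of $S_n\wedge S_n$, i.e.\ the commutator in the Schur cover of lifts of two disjoint commuting transpositions. It is well known that in the double cover $2\cdot S_n$ such lifts anticommute, so this element is the nontrivial central element.

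Hence $\pi_*$ is injective, $x=1$, and $\Phi$ is an embedding. The main obstacle is the final identification of the generators. Quoting Arnold's computation together with the anticommutation of disjoint transpositions in the Schur cover should suffice.
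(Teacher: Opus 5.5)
Your proof is correct and follows the paper's argument. You use the same map $\bar\kappa_{B_n}\times(\pi\wedge\pi)$ and the same reduction to the kernel $H_2(B_n;\Z)\cong\Z/2$, and you show that the class of $[s_1,s_3]$ (i.e.\ $\sigma_1\wedge\sigma_3$) maps to the nontrivial element $s_1\wedge s_3$ of $H_2(S_n;\Z)$, justifying this via anticommuting lifts of disjoint transpositions in the Schur cover where the paper cites Karpilovsky. That nontriviality already forces the class of $[s_1,s_3]$ to be the generator of $H_2(B_n;\Z)\cong\Z/2$, so your argument needs only the order of $H_2(B_n;\Z)$, not Huebschmann's (or any) identification of its generator.
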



\begin{proof}
    The map
$p:B_n\longrightarrow S_n$ given by
$$\sigma_i\longmapsto s_i=\begin{pmatrix} i & i+1\end{pmatrix}$$
is an epimorphism. 
    Define
    $\iota :B_n\wedge B_n\to B_n'\times (S_n\wedge S_n)$
    by $$\iota=\bar{\kappa}_{B_n}\times (p\wedge p).$$
    Suppose $\iota(z)=1$ for some $z\in (B_n\wedge B_n)\setminus\{ 1\}$. Then $z\in\ker\bar{\kappa}_{B_n}$, and $z\in\ker p\wedge p$. Note that $\ker\bar{\kappa}_{B_n}\cong H_2(B_n,\Z)\cong C_2$ by a result of Huebschmann \cite[Theorem 5.1]{Huebschmann2012}. Furthermore, Huebschmann shows that $H_2(B_n,\Z)$ is generated by the abelian cycle $[\sigma_1\mid \sigma_3]-[\sigma_3\mid\sigma_1]$. One can infer from here that $\ker\bar{\kappa}_{B_n}=\langle \sigma_1\wedge \sigma_3\rangle$. This implies that $z=\sigma_1\wedge \sigma_3$, and therefore $s_1\wedge s_3=(p\wedge p)(z)=1$. On the other hand, $s_1\wedge s_3$ is a generator of $H_2(S_n,\Z)\cong C_2$. This essentially follows by inspecting the proof of Theorem 2.12.3 in \cite{Karpilovsky1987}, see also \cite[Proof of Lemma A.1]{Akita+2024}. This contradiction shows that $\iota$ is an embedding.
\end{proof}

To conclude the argument, note that $S_n\wedge S_n$, being finite, is obviously linear, whereas $B_n'$ is linear by a more profound result of Krammer \cite{Krammer2002}. Hence $B_n\wedge B_n$, and therefore also $B_n\otimes B_n$, is linear.

\end{document}